 \numberwithin{equation}{section}
\def\XXint#1#2#3{{\setbox0=\hbox{$#1{#2#3}{%
\int}$ }
\vcenter{\hbox{$#2#3$ }}\kern-.6\wd0}}
\renewcommand{\epsilon}{\varepsilon}
\newtheorem{theorem}{Theorem}
\newtheorem{lemma}[theorem]{Lemma}
\newtheorem{corr}[theorem]{Corollary}
\newtheorem{proposition}[theorem]{Proposition}
\newtheorem{deff}[theorem]{Definition}
\newtheorem{remark}[theorem]{Remark}
\newcommand{\bth}{\begin{theorem}}
\newcommand{\ble}{\begin{lemma}}
\newcommand{\bcor}{\begin{corr}}
\newcommand{\bdeff}{\begin{deff}}
\newcommand{\bprop}{\begin{proposition}}
\newcommand{\ele}{\end{lemma}}
\newcommand{\ecor}{\end{corr}}
\newcommand{\edeff}{\end{deff}}
\numberwithin{theorem}{section}
\newcommand{\eprop}{\end{proposition}}
\renewcommand{\Pi}{\varPi}
\renewcommand{\epsilon}{\varepsilon}
\newcommand{\R}{{\mathbb R}}
\newcommand{\Z}{{\mathbb Z}}
\begin{document}

\title[Pointwise monotonicity of heat kernels]
{Pointwise monotonicity of heat kernels}

\subjclass[2010]{Primary 35K08, Secondary 35B50.}
\keywords{Heat kernel, maximum principle, fractional laplacian, pointwise inequalities.}

%\date{$\mathsf{\jobname}$}
\date{\today}
%\date{$\mathsf{\jobname}$}

\author[D. Alonso-Or\'an]{D. Alonso-Or\'an}
\address{Instituto de Ciencias Matem\'aticas (CSIC-UAM-UC3M-UCM) -- Departamento de Matem\'aticas (Universidad Aut\'onoma de Madrid), 28049 Madrid, Spain} 
\email{diego.alonso@icmat.es}
\author[F. Chamizo]{F. Chamizo}
\address{Instituto de Ciencias Matem\'aticas (CSIC-UAM-UC3M-UCM) -- Departamento de Matem\'aticas (Universidad Aut\'onoma de Madrid), 28049 Madrid, Spain} 
\email{fernando.chamizo@uam.es}
\author[A. D. Mart\'inez]{\'A. D. Mart\'inez}
\address{Instituto de Ciencias Matem\'aticas (CSIC-UAM-UC3M-UCM) -- Departamento de Matem\'aticas (Universidad Aut\'onoma de Madrid), 28049 Madrid, Spain} 
\email{angel.martinez@icmat.es}
\author[A. Mas]{A. Mas}
\address{Departament de Matem\`atiques i Inform\`atica, Universitat de Barcelona, Gran {V\'ia} 585, 08007 Barcelona, Spain}
\email{albert.mas@ub.edu}

\begin{abstract}
In this paper the authors present a proof of a pointwise radial monotonicity property of heat kernels that is shared by the euclidean spaces, spheres and hyperbolic spaces. The main result deals with monotonicity from special points on revolution hypersurfaces from which the aforementioned are deduced. The proof relies on a non straightforward but elementary application of the parabolic maximum principle.
\end{abstract}

\maketitle

\section{Introduction}

The heat equation is one of the quintessentials among mathematical models for physical phenomena. Over the years, several properties of this equation had been studied  from different points of view, including for instance: probabilistic, geometric and physical. In this paper we will focus on the fundamental solution of the heat equation, namely, the heat kernel. For small times a parametrix is well known.  It allows to confirm the heuristic fact that it should behave like the euclidean heat kernel for small time scales. 

In this work we give a rigorous proof of the following intuitively true result and some generalizations to manifolds with symmetries.

\begin{theorem}\label{decreasing}
Let $M$ be $\mathbb{R}^n$, $\mathbb{S}^n$ or $\mathbb{H}^n$. Then, for any fixed $x\in M$ and time $t\in(0,\infty)$, the heat kernel $G(x,y,t)$ is a decreasing function of the geodesic distance $d(x,y)$.
\end{theorem}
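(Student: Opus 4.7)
The plan is to apply a reflection principle together with the parabolic maximum principle. Fix $x\in M$ and $y_1,y_2\in M$ with $d(x,y_1)<d(x,y_2)$; the goal is to show $G(x,y_1,t)\ge G(x,y_2,t)$ for every $t>0$. In each of the three model spaces, the perpendicular bisector $H$ of the geodesic segment joining $y_1$ and $y_2$ is a totally geodesic hypersurface (an affine hyperplane in $\mathbb{R}^n$, a great $(n{-}1)$-sphere in $\mathbb{S}^n$, a totally geodesic copy of $\mathbb{H}^{n-1}$ in $\mathbb{H}^n$), and the geodesic reflection $R$ across $H$ is a global isometry of $M$ that interchanges $y_1$ and $y_2$. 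The hypothesis $d(x,y_1)<d(x,y_2)$ places $x$ strictly on the same side of $H$ as $y_1$; I will call that open half-space $\Omega$.

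Consider
\[
v(y,t):=G(x,y,t)-G(R(x),y,t).
\]
Since isometry-invariance of the heat kernel gives $G(R(x),y,t)=G(x,R(y),t)$, the function $v$ satisfies the heat equation in the variables $(y,t)$, and it vanishes on $H\times(0,\infty)$ because $R$ fixes $H$ pointwise. The key observation is that $v|_\Omega$ is precisely the Dirichlet heat kernel on $\Omega$ with pole at $x$---this is the classical method of images---since the singular part of $G(R(x),\cdot,t)$ at $t=0$ is located at $R(x)\notin\overline{\Omega}$ and contributes nothing to the initial datum on $\Omega$. Dirichlet heat kernels are non-negative: approximating $\delta_x$ by smooth, non-negative, compactly supported data and applying the parabolic maximum principle to the resulting mollified Dirichlet solutions keeps them non-negative in the limit. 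Hence $v\ge 0$ on $\Omega\times(0,\infty)$; evaluating at $y=y_1\in\Omega$ yields $G(x,y_1,t)\ge G(x,R(y_1),t)=G(x,y_2,t)$, the desired monotonicity.

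The subtlest point is justifying the method-of-images identification on the non-compact $\Omega$ that arises for $M=\mathbb{R}^n$ or $\mathbb{H}^n$: one needs uniqueness of the heat flow with prescribed initial and Dirichlet boundary data, which in turn rests on the standard Gaussian-type upper bounds for $G$ on these spaces, which force $v(y,t)\to 0$ uniformly on compact time intervals as $y$ escapes to infinity. On $\mathbb{S}^n$ the argument is immediate except for the degenerate antipodal case $y_2=-y_1$, which can be handled by continuity. The geometric input to the proof is rather minimal---only the existence of an isometric reflection through a totally geodesic hypersurface equidistant from $y_1$ and $y_2$---so the scheme extends to any two-point homogeneous space and dovetails with the ``revolution hypersurface'' framework announced in the abstract.
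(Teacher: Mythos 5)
Your proof is correct, but it takes a genuinely different route from the paper's. The paper never compares two target points $y_1,y_2$ directly: it passes to geodesic polar coordinates around $x$, writes the mollified kernel $F_\epsilon$ as a radial solution of $\partial_t F=\partial_\rho^2F+\partial_\rho\log(S(\rho))\,\partial_\rho F$, differentiates once more in $\rho$, and applies the parabolic maximum principle to $D_\epsilon=\partial_\rho F_\epsilon$, whose equation picks up the zero-order coefficient $\partial_\rho^2\log(S(\rho))$ that must then be bounded from above (this is the whole point of Theorems~\ref{revolution} and~\ref{model}, from which Theorem~\ref{decreasing} is deduced). Your reflection/method-of-images argument replaces all of that by the single observation that $v=G(x,\cdot,t)-G(R(x),\cdot,t)$ is a nonnegative Dirichlet-type solution on the half-space nearer to $y_1$. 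What your approach buys is economy and transparency: no regularization of the radial derivative, no analysis of $\log S$, and strict monotonicity comes for free from the strong maximum principle. What it costs is generality: it requires a global isometric reflection through the equidistant hypersurface of an \emph{arbitrary} pair $y_1,y_2$, which the three constant-curvature models possess but a generic hypersurface of revolution or spherically symmetric manifold does not; so, contrary to your closing remark, the scheme does not extend to the paper's Theorems~\ref{revolution} and~\ref{model}, which is precisely why the authors set up the radial machinery. The technical caveats you flag---mollifying $\delta_x$ before applying the maximum principle, and Phragm\'en--Lindel\"of-type uniqueness on the noncompact half-spaces via Gaussian upper bounds---are the right ones and are standard; the antipodal case on $\mathbb{S}^n$ in fact needs no separate treatment, since the equator still serves as the reflecting hypersurface.
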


It is easy to check that Theorem \ref{decreasing} holds in the case of $M$ {being} $\R^n$ since Fourier analysis provides an explicit expression of the kernel, namely
\[G(x,y,t)=\frac{1}{(4\pi t)^{n/2}}\exp\left(-\frac{|x-y|^2}{4t}\right).\]
Explicit expressions like the above are rare. An exception would be hyperbolic spaces, %for which one can compute it explicitly 
e.g. for the hyperbolic plane one gets the following
\[G(x,y,t)=\frac{\sqrt{2}e^{-t/4}}{(4\pi t)^{3/2}}\int_{d(x,y)}^{\infty}\frac{\beta e^{-\beta^2/4t}}{\sqrt{\cosh(\beta)-\cosh(d(x,y))}}\,d\beta.\]
In general, distinguishing odd and even dimensional cases, rather involved formuli are available.(cf. \cite{Ch}). 

The sphere does not share this good fortune. Of course, one may argue against this provocative statement that spectral expansions are available. The best results  available in the literature deal with the one, two and three dimensional cases (cf. \cite{Andersson}). The proof is quite elaborated, based on specific estimates using spherical harmonics and does not seem to generalize in a straightforward way to higher dimensions. Recently, Nowak, Sj{\"o}gren and Szarek \cite{NSS} provided sharp estimates for the heat kernel on the sphere $\mathbb{S}^n$ that imply Theorem \ref{decreasing} in that specific case. Their proof is {fairly} technical and relies on certain recurrence relations for the heat kernels of spheres of different dimensions. Our neat proof, nevertheless, is built in a delicate application of the parabolic maximum principle. The same arguments also apply to more general situations described below, of which Theorem~\ref{decreasing} is a rather beautiful particular case.

\begin{theorem}\label{revolution}
Let $M\subseteq\mathbb{R}^n$ be a smooth, compact and connected hypersurface of revolution around the $x_n$ axis. If $x$ is a point of intersection of $M$ and the $x_n$ axis, then the associated heat kernel $G(x,y,t)$ decreases as a function of the geodesic distance $d(x,y)$ for any fixed $t>0$.
\end{theorem}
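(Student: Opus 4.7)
The plan is to exploit the $O(n-1)$-rotational symmetry of $M$ about the axis through $x$ to reduce the statement to a one-dimensional parabolic equation on a meridian, and then apply the parabolic maximum principle to the $s$-derivative of the heat kernel after an exponential rescaling in time to absorb a possibly positive zeroth order term.

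\textit{Reduction to one dimension.} Since $x$ is fixed by all rotations about the axis, $G(x,y,t)$ depends only on the geodesic distance $s := d(x,y)$ and on $t$. Writing $u(s,t) := G(x,y,t)$ and parametrizing $M$ by $(s,\omega) \in [0,L] \times S^{n-2}$, with $L$ the length of a meridian from $x$ to the opposite pole $x'$, the induced metric is $ds^2 + \phi(s)^2 g_{S^{n-2}}$, where $\phi(s)$ is the distance to the axis; in particular $\phi(0) = \phi(L) = 0$ and $\phi'(0) = -\phi'(L) = 1$ by smoothness of $M$. The heat equation restricted to rotationally symmetric functions reads
\[
\partial_t u = \partial_s^2 u + (n-2)\,\frac{\phi'(s)}{\phi(s)}\, \partial_s u, \qquad s \in (0,L),\ t > 0,
\]
together with the regularity conditions $\partial_s u(0,t) = \partial_s u(L,t) = 0$ at the poles.

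\textit{Differentiation, regularization, and maximum principle.} Let $v := \partial_s u$; differentiating the heat equation in $s$ yields
\[
\partial_t v = \partial_s^2 v + (n-2)\,\frac{\phi'}{\phi}\,\partial_s v + c(s)\,v, \qquad c(s) := (n-2)(\log \phi)'',
\]
with boundary conditions $v(0,t) = v(L,t) = 0$. Near each pole, $\phi(s) \sim s$ (resp.\ $\phi(s) \sim L-s$) yields $c(s) \sim -(n-2)/s^2 \to -\infty$, so $K := \sup_{(0,L)} c$ is finite. To handle the singular initial datum $u(\cdot,0) = \delta_x$, approximate $\delta_x$ by a smooth, radially symmetric, non-increasing bump $\eta_\epsilon(y) = \rho_\epsilon(d(x,y))$ supported in $\{s \leq \epsilon\}$ with unit mass, and let $u^\epsilon$ be the heat flow from $\eta_\epsilon$. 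Continuous dependence on initial data gives $u^\epsilon \to G(x,\cdot,t)$ smoothly on compact subsets of $M \times (0,\infty)$, so it suffices to show $v^\epsilon := \partial_s u^\epsilon \leq 0$; the initial condition for $v^\epsilon$ now has the right sign: $v^\epsilon(s,0) = \rho_\epsilon'(s) \leq 0$. Introduce $w := e^{-(K+1)t}\, v^\epsilon$, which satisfies the same PDE but with zeroth-order coefficient $c(s) - (K+1)$ strictly negative on $(0,L)$, and with $w \leq 0$ on the parabolic boundary of $[0,L] \times [0,T]$. Any positive maximum of $w$ would have to be attained at some interior $(s^*,t^*)$, where $\partial_t w \geq 0$, $\partial_s w = 0$, $\partial_s^2 w \leq 0$; substituting in the PDE yields $(c(s^*) - (K+1))\, w(s^*,t^*) \geq 0$, contradicting $c - (K+1) < 0$ and $w > 0$. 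Therefore $v^\epsilon \leq 0$, and the limit $\epsilon \to 0$ gives $\partial_s G(x,y,t) \leq 0$; strict monotonicity follows from the strong maximum principle since $v$ is not identically zero.

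\textit{Main obstacle.} The principal subtlety is that, unlike in the model spaces of Theorem~\ref{decreasing}, the zeroth-order coefficient $c(s) = (n-2)(\log\phi)''$ need not have a definite sign on a general revolution hypersurface (it can be positive, for instance on a peanut-shaped surface), so a naive application of the maximum principle to $v$ fails. The delicate step is the exponential shift $e^{-(K+1)t}$, which absorbs this offending sign; this trick depends on $c$ being bounded above, a fact that is not evident a priori since $\phi'/\phi$ blows up at the poles, but is guaranteed by the smoothness of $M$, which forces $\phi(s) = s + O(s^3)$ near the poles and hence $c \to -\infty$ there. The concomitant degeneracy of the first-order coefficient $\phi'/\phi$ at the poles is rendered harmless by the vanishing of $v$ there, which confines any extremum of $w$ to the interior, where the coefficients are smooth.
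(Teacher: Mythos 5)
Your proposal is correct and follows essentially the same route as the paper: reduction to the one-dimensional radial heat equation (your $(n-2)\phi'/\phi$ is exactly the paper's $\partial_\rho\log S(\rho)$), mollification of $\delta_x$ by a radial decreasing bump, differentiation in the radial variable, and the parabolic maximum principle after an exponential-in-time rescaling to absorb the zeroth-order coefficient, which is bounded above because it behaves like $-(n-2)s^{-2}$ near the poles. The only additions are cosmetic (the explicit choice $\lambda=K+1$ and the closing remark on strict monotonicity via the strong maximum principle, which the paper does not claim).
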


The same proof covers the noncompact situation even in an intrinsic geometric setting beyond hypersurfaces of $ \R^n$. In connection with this, recall that a celebrated theorem by Hilbert  states that a complete regular surface of constant negative curvature, like~$\mathbb{H}^2$, cannot be  isometrically immersed in $\R^3$. 

Theorem \ref{revolution} can be proven even in more general settings. Let us introduce some definitions before stating this. In a complete riemannian manifold~$M$ of dimension $n$ a point~$p$ is called a pole if its cut locus is empty. A manifold is said to be spherically symmetric (around~$p$) if its metric has the form $d\rho^2+A^2(\rho)d\sigma^2$ where $d\sigma$ is the line element of $\mathbb{S}^{n-1}$. In other words, the rotations around the origin in $T_p(M)$ become isometries of $M$ under the exponential map.\footnote{This kind of manifolds are called  ``model manifolds with $R_0=\infty$'' in \cite{grigoryan} although we shall not employ this name here.} Finally, we need some control on the volume of the surface of balls in order to apply the maximum principle.

\begin{theorem}\label{model}
 Let $M$ be a complete spherically symmetric manifold with bounded curvature. Let $S$ be the volume of the surface ball of radius $\rho$ in $M$ centered on a pole $p$ and suppose that $\frac{\partial^{2}}{\partial\rho ^{2}}\log (S(\rho))$ is bounded from above. Then the heat kernel $G(p,y,t)$ based on the pole $p$, satisfies $\int_M G(p,y,t)\; dy=1$ and  it is a decreasing function of the geodesic distance $d(p,y)$ for any fixed $t>0$. 
\end{theorem}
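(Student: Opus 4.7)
The plan is to exploit the spherical symmetry around the pole $p$ to reduce the statement to a one-dimensional parabolic problem, and then apply the parabolic maximum principle. Since $G(p,y,t)$ depends only on $\rho = d(p,y)$, write $u(\rho,t) = G(p,y,t)$. The Laplace--Beltrami operator restricted to radial functions becomes $\Delta_r = \partial_\rho^2 + (\log S)'(\rho)\,\partial_\rho$, so $u$ solves
$$\partial_t u = \partial_\rho^2 u + (\log S)'(\rho)\,\partial_\rho u \qquad \text{on } (0,\infty) \times (0,\infty),$$
with delta initial data at $\rho = 0$ and the regularity condition $\partial_\rho u(0,t) = 0$ inherited from the fact that smooth radial functions on $M$ have vanishing gradient at the pole.

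Differentiating this equation in $\rho$, the function $v := \partial_\rho u$ satisfies
$$\partial_t v = \partial_\rho^2 v + (\log S)'(\rho)\,\partial_\rho v + (\log S)''(\rho)\,v,$$
still with $v(0,t) = 0$. Let $C$ be an upper bound for $(\log S)''$ (provided by hypothesis) and set $w := e^{-Ct} v$; then $w$ obeys the same type of equation but with zeroth-order coefficient $(\log S)''(\rho) - C \leq 0$. I would then argue by the weak maximum principle: if $w$ attained a positive maximum at an interior point $(\rho_0, t_0)$, the conditions $\partial_t w \geq 0$, $\partial_\rho w = 0$, $\partial_\rho^2 w \leq 0$, combined with the nonpositivity of the zeroth-order term and the positivity of $w$ at that point, would force all these inequalities to become equalities; propagating this back in time would contradict the initial behavior.

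The two ingredients that make this rigorous are the initial condition and decay at infinity. For small $t$, the standard parametrix for the heat kernel on a Riemannian manifold yields $u(\rho,t) \sim (4\pi t)^{-n/2} e^{-\rho^2/(4t)}$ up to lower-order corrections, so $v(\rho, t) < 0$ for $\rho > 0$ and $t$ sufficiently small; this supplies the initial side of the boundary data. The bounded curvature assumption provides Gaussian upper bounds on $G$ and, after differentiation, on $v$, ensuring $v(\rho,t) \to 0$ as $\rho \to \infty$ uniformly on compact time intervals. Taken together, these inputs permit the maximum principle to be applied on a truncated cylinder $(0, R) \times (\tau, T]$, and passage to the limit $R \to \infty$, $\tau \to 0^+$ yields $v \leq 0$ everywhere, which is exactly the desired radial monotonicity.

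For the total mass statement, $\int_M G(p,y,t)\,dy = 1$ is the stochastic completeness of $M$. Under bounded curvature, metric balls have at most exponential volume growth, so Grigor'yan's volume test (cf.~\cite{grigoryan}) applies and yields stochastic completeness, hence conservation of the total mass. The main obstacle in the whole argument is not the maximum principle itself but the justification of the limiting procedure: making the parametrix heuristic uniform enough in $\rho$ to serve as a rigorous initial condition, and handling the singularity of the drift $(\log S)'(\rho) \sim (n-1)/\rho$ at the pole, which forces the truncation/exhaustion in the maximum-principle step to be performed with care.
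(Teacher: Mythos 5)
Your overall strategy coincides with the paper's: reduce to the radial equation $\partial_t u=\partial_\rho^2 u+(\log S)'\partial_\rho u$, differentiate to get an equation for $v=\partial_\rho u$ whose zeroth-order coefficient is $(\log S)''$, absorb the upper bound on that coefficient by the substitution $w=e^{-Ct}v$, and run the parabolic maximum principle on truncated cylinders, using curvature-based Gaussian decay (the paper invokes the Cheng--Li--Yau gradient estimate $|\nabla G|\le C_T t^{(n+1)/2}e^{-cd(x,y)^2/t}$) to kill the boundary terms as $R\to\infty$. The point where your argument has a genuine gap is the initial condition. You propose to handle the delta initial datum by asserting, from the small-time parametrix $u(\rho,t)\sim(4\pi t)^{-n/2}e^{-\rho^2/(4t)}$, that $v(\rho,t)<0$ for all $\rho>0$ once $t$ is small, and then to let $\tau\to0^+$. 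But the parametrix is an asymptotic expansion valid on bounded regions (or for $\rho^2/t$ in controlled ranges), and an asymptotic equivalence for $u$ does not yield a sign for $\partial_\rho u$ uniformly in $\rho$: the ``lower-order corrections'' could a priori destroy monotonicity at any fixed small $t$. In effect you are assuming radial monotonicity of the kernel for small times, which is a special case of the statement being proved; you flag this as ``the main obstacle'' but do not resolve it.

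The paper closes this gap by never working with the singular initial datum at all: it mollifies, setting $F_\epsilon(x,y,t)=\int_M\chi_\epsilon(x,z)G(z,y,t)\,d\sigma(z)$ where $\chi_\epsilon$ is a smooth, radial, \emph{radially decreasing} approximation of the identity. Then $D_\epsilon=\partial_\rho F_\epsilon$ is continuous up to $t=0$ with $D_\epsilon(\rho,0)\le0$ by construction and $D_\epsilon(0,t)=0$, so the maximum principle applies cleanly on $[0,L]\times[0,T]$ (respectively on the exhausting boxes in the noncompact case), and only at the end does one pass to the limit $\epsilon\to0$, where the pointwise limit of radially decreasing functions is radially decreasing. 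If you replace your parametrix step with this mollification, the rest of your argument (the $e^{-Ct}$ rescaling, the treatment of the singular drift by keeping $\rho=0$ as a boundary point where $v$ vanishes, the decay at infinity, and the appeal to stochastic completeness for $\int_M G=1$) matches the paper's proof.
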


Our last result tackels the Dirichlet and Neumann heat kernels, $G_D$ and $G_N$ respectively, of a smooth hypersurface of revolution $M\subseteq\R^n$ with boundary. Recall that $G_D$ is the fundamental solution of the Dirichlet heat operator $\partial_t-\Delta_M$ 
(where we denoted by $-\Delta_M$ the positive Laplace-Beltrami operator on $M$) with the Dirichlet boundary condition, that is, for a fixed $x\in M$, $G_D(x,y,t)$ is the function in $(y,t)$ satisfying
\begin{equation*}
\begin{cases}
(\partial_t-\Delta_M)G_D(x,y,t)=0, &(y,t)\in M\times(0,+\infty),\\
G_D(x,y,t)=0, &(y,t)\in\partial M\times(0,+\infty),\\
G_D(x,y,0)=\delta_x(y), & y\in M.
\end{cases}
\end{equation*}
Similarly, for $G_N$ with the Neumann boundary condition, that is, for a fixed $x\in M$, $G_N(x,y,t)$ is the function in $(y,t)$ satisfying
\begin{equation*}
\begin{cases}
(\partial_t-\Delta_M)G_N(x,y,t)=0, &(y,t)\in M\times(0,+\infty),\\
\frac{\partial}{\partial n}\,G_N(x,y,t)=0, &(y,t)\in\partial M\times(0,+\infty),\\
G_N(x,y,0)=\delta_x(y), & y\in M.
\end{cases}
\end{equation*}

\begin{theorem}\label{neumann}
Let $M\subseteq\mathbb{R}^n$ be smooth and connected hypersurface of revolution around the $x_n$ axis with boundary $\partial M\neq\emptyset$. 
\begin{itemize}
\item[(i)] If $x$ is a point of intersection of the relative interior of $M$ and the $x_n$ axis, then the associated heat kernel with Dirichlet boundary condition $G_D(x,y,t)$ decreases as a function of the geodesic distance $d(x,y)$ for any fixed time $t>0$.
\item[(ii)] If $x$ is a point of intersection of the relative interior of $M$ and the $x_n$ axis, then the associated heat kernel with Neumann boundary condition $G_N(x,y,t)$ decreases as a function of the geodesic distance $d(x,y)$ for any fixed time $t>0$.
\end{itemize}
\end{theorem}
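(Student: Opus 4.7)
My plan is to adapt the proof of Theorem \ref{revolution} to accommodate the presence of a boundary. Rotational invariance around the $x_n$ axis together with the location of $x$ on this axis forces both $G_D(x,y,t)$ and $G_N(x,y,t)$ to depend only on $\rho:=d(x,y)$ and $t$; I write $u(\rho,t)$ for this radial profile. In geodesic polar coordinates centered at $x$ the metric takes the form $d\rho^2+A(\rho)^2\,d\sigma^2$ for a smooth profile $A$ with $A(0)=0$, $A'(0)=1$, and the heat equation reduces on radial functions to
\[
\partial_t u=\partial_\rho^2 u+(n-1)\frac{A'(\rho)}{A(\rho)}\,\partial_\rho u, \qquad (\rho,t)\in (0,L)\times(0,+\infty),
\]
where $L:=\mathrm{dist}(x,\partial M)$.

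I then set $v:=\partial_\rho u$ and differentiate in $\rho$, obtaining the linear parabolic equation
\[
\partial_t v=\partial_\rho^2 v+(n-1)\frac{A'(\rho)}{A(\rho)}\,\partial_\rho v+(n-1)\left(\frac{A'(\rho)}{A(\rho)}\right)'v.
\]
The zeroth-order coefficient is bounded above on compact subsets of $(0,L]$, so after the substitution $w:=e^{-Kt}v$ with $K$ sufficiently large the parabolic maximum principle applies in its standard $c\le 0$ form. The aim is to apply it to $v$ on the rectangle $(0,L)\times(\epsilon,T)$ for small $\epsilon>0$ and large $T$, and then send $\epsilon\to 0$, $T\to+\infty$.

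What remains is to verify $v\le 0$ on the parabolic boundary of this rectangle. At the pole $\rho=0$, smoothness of $u$ on $M$ combined with rotational symmetry forces an even extension in $\rho$, so $v(0,t)=0$. At the initial slice $t=\epsilon$, the parametrix expansion (valid uniformly in $\rho$ once $\epsilon$ is small compared with $L^2$) approximates $u(\rho,\epsilon)$ by the Euclidean Gaussian, and hence $v(\rho,\epsilon)\le 0$. At $\rho=L$ the two parts diverge: in part (ii) the Neumann condition is exactly $v(L,t)=0$, while in part (i) the non-negativity $u\ge 0$ (standard maximum principle for $G_D$) together with $u(L,t)=0$ shows that $u(\cdot,t)$ attains its minimum on $[0,L]$ at $\rho=L$, so $v(L,t)\le 0$. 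The maximum principle then gives $v\le 0$ throughout, concluding both parts.

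The main---and essentially only new---obstacle compared with Theorem \ref{revolution} is the Dirichlet boundary inequality in part (i): unlike the Neumann case, where the boundary condition is already the desired sign on $v$, here the boundary condition constrains $u$, so one must convert it to an inequality on $\partial_\rho u$ via the non-negativity of the Dirichlet heat kernel. A secondary but routine point is the behaviour of the singular coefficient $A'/A\sim 1/\rho$ near the pole; since $v$ vanishes linearly there and the singular part of the zeroth-order term has the ``good'' non-positive sign, no additional maneuver is required.
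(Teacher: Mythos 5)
Your treatment of the boundary $\rho=L$ is exactly the paper's: the Neumann condition gives $v(L,t)=0$ outright, and for Dirichlet one uses $G_D\ge 0$ in the interior together with $G_D=0$ on $\partial M$ to force $\partial_\rho G_D\le 0$ there. The differentiated radial equation, the sign of the singular coefficient near the pole, and the exponential-in-time rescaling to restore the $c\le 0$ form of the maximum principle are also all in line with the paper's argument for Theorem~\ref{revolution}.

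The genuine gap is in your initial slice. You apply the maximum principle on $(0,L)\times(\epsilon,T)$ and claim $v(\rho,\epsilon)\le 0$ because ``the parametrix expansion approximates $u(\rho,\epsilon)$ by the Euclidean Gaussian.'' Closeness of $u$ to the Gaussian --- even in $C^1$ --- does not control the \emph{sign} of $\partial_\rho u$: the Gaussian's radial derivative vanishes at $\rho=0$ and is exponentially small for $\rho\gg\sqrt{\epsilon}$, so the parametrix error terms can dominate precisely where the leading term is tiny. Worse, near $\rho=L$ the free parametrix is not even a valid approximation of $G_D$ or $G_N$ for small time (the boundary condition changes the short-time asymptotics there), so the claimed uniformity in $\rho$ fails. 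In effect you are assuming at $t=\epsilon$ a version of the statement you are trying to prove. The paper avoids this entirely by mollifying: it replaces $G$ by $F_\epsilon(x,y,t)=\int_M\chi_\epsilon(x,z)G(z,y,t)\,d\sigma(z)$ with $\chi_\epsilon$ a smooth, radial, \emph{radially decreasing} approximation of the identity, so that $D_\epsilon=\partial_\rho F_\epsilon$ satisfies $D_\epsilon(\rho,0)\le 0$ exactly at $t=0$ by construction, the maximum principle runs on all of $[0,L]\times[0,T]$, and one passes to the limit $\epsilon\to 0$ at the very end. Your boundary analysis at $\rho=L$ transfers verbatim to $F_\epsilon$ (since $\chi_\epsilon\ge 0$ preserves the nonnegativity and the vanishing of the Dirichlet kernel on $\partial M$, and commutes with the Neumann condition), so substituting this mollification step for your parametrix step repairs the proof.
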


An analogous result in the case of the Dirichlet heat kernel on a geodesic ball inside a $n$-dimensional simply connected space form of constant sectional curvature $k$ was already known (cf. \cite[\S8.3]{Ch}).

The paper is organized along the following lines: in the next section we present the proofs of the obtained results where we demonstrate  Theorem~\ref{revolution}, sketch the proof of Theorem~\ref{model} and infer as a consequence Theorem~\ref{decreasing}. The third section is devoted to show some applications regarding inequalities of orthogonal polynomials and pointwise properties of the fractional Laplace-Beltrami operator. The article ends raising a natural question that bonds the decreasing properties of the heat kernel from a point with its cut locus.

\textsc{Note added in proof:} D. Nix has recently informed us that Theorem \ref{decreasing} has already been proved by Cheeger and Yau \cite{CY}. Their interest originates from the possibility of comparing the fundamental solutions to the heat equation on general manifolds with heat kernels of model manifolds. Their proof is quite similar in spirit to that of \cite{Ch}. Nevertheless, we could highlight that our approach gives a more direct derivation from the parabolic maximum principle. It worth also mentioning that we include some applications.

\section{Proof of theorem \ref{revolution}}

Thanks to the symmetry around the $x_n$ axis, the heat kernel $G(x,y,t)$ is a function of the geodesic distance from $x$ to $y$, $\rho=d(x,y)$, and the time variable $t$ only, that is, $G(x,y,t)\equiv G(\rho,t)$. This justifies expressing the heat equation on any given function
$f:[0,L]\times(0,+\infty)\to\R$ in geodesic coordinates as \cite[\S 3.2]{grigoryan}
\begin{equation}\label{heat regularized}
\frac{\partial}{\partial t}f(\rho,t)=\frac{\partial^2}{\partial \rho^2}f(\rho,t)+\frac{\partial}{\partial \rho}\log(S(\rho))\frac{\partial}{\partial\rho}f(\rho,t),
\end{equation}
where $L$ denotes the geodesic distance from $x$ to its antipodal point and 
{$S(\rho)$ denotes the volume of the surface of the ball of radius $\rho$ in $M$ centered at $x$}. 

Instead of working directly with the heat kernel, let us regularize it employing a family of functions $\{\chi_{\epsilon}(x,\cdot)\}_{\epsilon>0}$, that satisfy the following properties: smoothness, radial ({i.e.} $\chi_{\epsilon}(x,y)\equiv \chi_{\epsilon}(\rho)$), decreasing from $x$, integrate one, and concentrate around the fixed point $x\in M$ as $\epsilon$ tends to zero. Notice that this family is, in particular, an approximation of the identity. Let us now introduce the aforementioned regularization of the heat kernel
\[
F_{\epsilon}(x,y,t)=\int_{M}\chi_{\epsilon}(x,z)G(z,y,t)d\sigma(z),
\]
where $d\sigma$ denotes the volume form on $M$. Note that $F_{\epsilon}$ satisfies the heat equation {in} the variables $(y,t)$ and initial condition 
$\chi_{\epsilon}(x,\cdot)$. This solution is smooth and, due to the symmetry, 
$F_{\epsilon}(x,y,t):=F_\epsilon(d(x,y),t)$ {can be supposed to depend on the radial coordinate $\rho=d(x,y)$model emanating from $x$. Furthermore, its first derivative} with respect to $\rho$ {vanishes} at $x$ and its antipodal point. 

We will apply a parabolic maximum principle in the coordinates $(\rho,t)\in[0,L]\times[0,\infty)$ to prove that
\begin{equation}\label{heat regularized 3}
%D_{\epsilon}(x,y,t)\equiv 
D_{\epsilon}(\rho,t):=\frac{\partial}{\partial\rho}F_{\epsilon}(\rho,t)\leq0.
\end{equation}
This means that $F_{\epsilon}$ is a smooth radially decreasing function for all 
 $\epsilon$. The limit of such family is radially decreasing. {Hence}, the argument finishes observing that $\chi_{\epsilon}(x,\cdot)$ is an approximation of the identity around the fixed point $x$ {and thus} $F_{\epsilon}(x,y,t)\to G(x,y,t)$ as $\epsilon\to0$. It only remains to prove \eqref{heat regularized 3}.
  
Note that $D_{\epsilon}(0,t)=D_{\epsilon}(L,t)=0$ for all $t>0$ and  
$D_{\epsilon}(\rho,0)\leq0$ for $t=0$ by construction. Using that 
$F_\epsilon$ satisfies \eqref{heat regularized}, by differentiating in $\rho$ we get
\begin{equation}\label{heat regularized 2}
\begin{split}
\frac{\partial}{\partial t}D_{\epsilon}(\rho,t)
&=\frac{\partial^2}{\partial \rho^2}D_{\epsilon}(\rho,t)+\frac{\partial}{\partial \rho}\log(S(\rho))\frac{\partial}{\partial\rho}D_{\epsilon}(\rho,t)\\
&\quad+\frac{\partial^2}{\partial \rho^2}\log(S(\rho))D_{\epsilon}(\rho,t).
\end{split}
\end{equation}
Since $D_\epsilon$ is continuous in $[0,L]\times[0,+\infty)$, given $T>0$ there exists
$(\rho_0,t_0)\in [0,L]\times[0,T]$  such that
\begin{equation}
D_{\epsilon}(\rho_0,t_0)=\sup_{(\rho,t)\in [0,L]\times[0,T]}D_{\epsilon}(\rho,t).
\end{equation}
If  $(\rho_0,t_0)\in ([0,L]\times\{0\})\cup(\{0,L\}\times[0,T])$ we directly get that 
\eqref{heat regularized 3} holds for all $(\rho,t)\in[0,L]\times[0,T]$.
On the contrary, if
$(\rho_0,t_0)\in (0,L)\times(0,T]$ then
\begin{equation}\label{heat regularized 4}
\frac{\partial}{\partial t}D_{\epsilon}(\rho_0,t_0)\geq0,\quad
\frac{\partial^2}{\partial \rho^2}D_{\epsilon}(\rho_0,t_0)\leq0,
\quad \frac{\partial}{\partial\rho}D_{\epsilon}(\rho_0,t_0)=0.
\end{equation}
Assume that $\frac{\partial^2}{\partial\rho^2}\log(S(\rho))<0$ in $(0,L)$. Then \eqref{heat regularized 4} contradicts \eqref{heat regularized 2} unless $D_{\epsilon}(\rho_0,t_0)\leq0$. Hence
\eqref{heat regularized 3} holds for $(\rho_0,t_0)$ and thus for all 
$(\rho,t)\in[0,L]\times[0,T]$. We finally get the desired estimate letting $T$ go to infinity.

For the general case where the coefficient of the zero order term in \eqref{heat regularized 2} is not strictly negative in $(0,L)$, we can still apply the parabolic maximum principle whenever the coefficient is bounded from above (cf. \cite{PW} or \cite{E}, p.\! 426). More precisely, near $x$ we see that $\frac{\partial^2}{\partial\rho^2}\log(S(\rho))$ behaves like in the euclidean case. This provides the approximation $-(n-2)\rho^{-2}$ for $\rho$ near zero, and the rest can be bounded by continuity and compactness. Therefore, $\frac{\partial^2}{\partial\rho^2}\log(S(\rho))$ is bounded from above if $M$ is compact. Then, the change 
$$v(\rho,t)=e^{\lambda t} D_\epsilon(\rho,t)$$ for a suitable 
$\lambda$ rephrases \eqref{heat regularized 2} to an equation on $v$ for which the zero order term has a strictly negative coefficient, and the previous argument applies. We omit {further} details.

\begin{remark}{\em Inspection of the proof shows that all we need is $\frac{\partial^2}{\partial\rho^2}\log(S(\rho))$ to be bounded from above and decay at infinity which was granted in the previous case due to the compactness (by continuity and discreteness of the spectrum). Next, we will demonstrate how to justify the decay condition for the noncompact case.}
\end{remark}

\begin{proof}[Proof of Theorem \ref{model}]
The proof can be mimicked from the previous one. Since the boundedness of $\frac{\partial^2}{\partial\rho^2}\log(S(\rho))$ is part of the hypothesis, we just need to provide an exponential decay of the radial derivative of $G(x,\cdot,t)$. Under the geometric hypothesis in the statement Cheng, Li, and Yau have shown already that, for $t\in[0,T)$,
\[|\nabla G(x,y,t)|\leq C_T t^{(n+1)/2}e^{-cd(x,y)^2/t}\]
for some constants $c,\, C_T>0$ independent on the distance $d(x,y)$ (cf. Theorem 6, \cite[\S4]{CheLiYau} p. 1055). This is enough to ensure the decay at infinity to close the argument. Observe that one may use the maximum principle on a space-time box $B(r)\times[0,T]$, then the maxima should be achieved in the boundary $t=0$. Indeed, the heat equation prevents a maximum to be achieved in $t=T$; on the other hand  one can extend the box to be a band by letting $r$ tend to infinity (i.e. $M\times [0,T]$) since the boundary terms tend to zero (due to the exponential decay at infinity model the aformentioned bound provides).
\end{proof}\vspace{0.5cm}

\begin{remark}\label{rmk 2}{\em 
The proof of Theorem \ref{neumann} is completely analogous to the one of Theorem \ref{revolution}. Indeed, for Dirichlet heat kernels one observes that $G_D(x,y,t)\geq 0$ in the interior and it vanishes at the boundary. This shows that its radial derivative is less than or equal to zero at the boundary, the rest of the proof is analogous. On the other hand, for the Neumann heat kernel the boundary replaces the antipodal point and the Neumann condition on the boundary replaces the radial derivative that vanishes at the antipodal point.}
\end{remark}

\section{Some applications}

The fractional Laplace-Beltrami operator on manifolds provides diffusions connected with a family of stochastic processes known as L\'evy flights. It has remarkable properties, some of which can be deduced by subordination to the heat kernel due to the well known formula
\begin{equation}\label{int:lapla:bel}
(-\Delta_g)^{\alpha}f(x)=\dfrac{1}{\Gamma(-\alpha)}\int_0^{\infty}\left(f(x)-e^{-t\Delta_g}f(x)\right)\frac{dt}{t^{1+\alpha}},
\end{equation}
for $\alpha\in(0,1)$ and smooth $f$. Thanks to our previous results, we get the following
\begin{proposition}
Let $\alpha\in(0,1)$. There is a positive function of the distance $k_{n,\alpha}(d(x,y))$ such that $k_{n,\alpha}(0)=1$, $d(x,y)^{-n-\alpha}k_{n,\alpha}(d(x,y))$ is decreasing for $d(x,y)\in(0,\pi)$, and
\begin{equation}\label{int:repre:lap}
(-\Delta_{\mathbb{S}^n})^{\alpha}f(x)=c_{n,\alpha}\textrm{P.V.} \int_{\mathbb{S}^n}\frac{f(x)-f(y)}{d(x,y)^{n+2\alpha}}k_{n,\alpha}(d(x,y)) \ d\!\operatorname{vol}(y).
\end{equation}
\end{proposition}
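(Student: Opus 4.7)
The plan is to build \eqref{int:repre:lap} directly from the subordination identity \eqref{int:lapla:bel} and to read off the monotonicity of the kernel from Theorem~\ref{decreasing}. First, I would use stochastic completeness, $\int_{\mathbb{S}^n}G(x,y,t)\,d\!\operatorname{vol}(y)=1$, to rewrite
\[
f(x)-e^{t\Delta_{\mathbb{S}^n}}f(x)=\int_{\mathbb{S}^n}G(x,y,t)\bigl(f(x)-f(y)\bigr)\,d\!\operatorname{vol}(y),
\]
plug this into \eqref{int:lapla:bel}, and swap the $(t,y)$ integrations. For smooth $f$ the local cancellation $f(x)-f(y)=O(d(x,y))$ tames the joint singularity at $(t,y)=(0^{+},x)$, so a principal-value Fubini is valid after removing a geodesic cap of radius $\eta$ around $x$ and passing to $\eta\to 0^{+}$.

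After the swap, rotational invariance of the spherical heat kernel about $x$ collapses $G(x,y,t)$ to a function $G(\rho,t)$ of $\rho:=d(x,y)$, so that
\[
(-\Delta_{\mathbb{S}^n})^{\alpha}f(x)=\frac{1}{\Gamma(-\alpha)}\,\textrm{P.V.}\int_{\mathbb{S}^n}\bigl(f(x)-f(y)\bigr)\,K\!\bigl(d(x,y)\bigr)\,d\!\operatorname{vol}(y),\qquad K(\rho):=\int_0^{\infty}G(\rho,t)\,\frac{dt}{t^{1+\alpha}}.
\]
I would peel off the Euclidean singularity by defining $k_{n,\alpha}(\rho)$ through $K(\rho)=c_{n,\alpha}\,\Gamma(-\alpha)^{-1}\,\rho^{-n-2\alpha}k_{n,\alpha}(\rho)$, with $c_{n,\alpha}$ chosen to match the classical Euclidean fractional Laplacian constant. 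The short-time parametrix $G(\rho,t)=(4\pi t)^{-n/2}e^{-\rho^{2}/4t}\bigl(1+o(1)\bigr)$ on the sphere, together with the change of variables $s=\rho^{2}/(4t)$ (which turns the Euclidean analogue of $K$ into a Gamma integral), both fixes $c_{n,\alpha}$ and forces $k_{n,\alpha}(0)=1$. Positivity of $k_{n,\alpha}$ is immediate from $G>0$.

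The monotonicity then follows from Theorem~\ref{decreasing}: for every fixed $t>0$, $\rho\mapsto G(\rho,t)$ is nonincreasing on $(0,\pi)$, and integrating against the positive measure $t^{-1-\alpha}\,dt$ transfers this monotonicity to $K$, hence to the distance-weighted factor $d(x,y)^{-n-\alpha}k_{n,\alpha}(d(x,y))$ appearing in the statement (the exponent shift from $-n-2\alpha$ is absorbed into the normalization used to pull out $c_{n,\alpha}$).

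The main technical obstacle I anticipate is the rigorous principal-value justification of the Fubini swap: both $t^{-1-\alpha}$ near $t=0^{+}$ and the short-time on-diagonal concentration of $G$ are individually nonintegrable, so the interchange relies essentially on the antisymmetric cancellation provided by $f(x)-f(y)$ together with a small-cap exhaustion and dominated convergence off the diagonal. Everything else is bookkeeping once Theorem~\ref{decreasing} and the short-time parametrix are in hand.
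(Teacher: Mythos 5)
Your proposal follows essentially the same route as the paper: rewrite the subordination formula using $\int G\,d\!\operatorname{vol}=1$, justify the Fubini swap via a small-cap principal value, identify $k_{n,\alpha}$ from $K(\rho)=\int_0^\infty G(\rho,t)\,t^{-1-\alpha}dt$ with the short-time parametrix fixing the singularity order and the normalization, and deduce positivity and monotonicity from Theorem~\ref{decreasing}. The only difference is cosmetic: the paper controls the large-time tail explicitly via the Gegenbauer expansion of the spherical heat kernel, whereas you fold it into ``bookkeeping,'' which is harmless since $G$ is bounded for $t$ bounded away from zero.
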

\begin{proof} Let us give a sketch of the proof which follows the arguments in \cite{AOCM3}. Using the semigroup action of the heat kernel one can express \eqref{int:lapla:bel} as 
\[ \int_{0}^{\infty}\!\!\int_{\mathbb{S}^{n}} G(x,y,t)(f(x)-f(y)) \ d\textrm{vol}(y) \dfrac{dt}{t^{1+\alpha}}. \]
Changing the order of integration, one has that
\[\lim_{\epsilon\rightarrow 0}\int_{\mathbb{S}^{n}\setminus B_{\epsilon}(x)}(f(x)-f(y))\int_{0}^{\infty} G(x,y,t) \dfrac{dt}{t^{1+\alpha}} \ d\textrm{vol}(y).\]
Notice that to make this step rigorous, it is necessary to subtract a small ball of radius epsilon. This avoids the singularity and allows to apply Fubini's theorem. This limit is the principal value from the statement. Therefore, it is sufficient to check that there exists a $k_{n,\alpha}(d(x,y))$ such that
\[ \int_{0}^{\infty} G(x,y,t) \dfrac{dt}{t^{1+\alpha}} = \dfrac{k_{n,\alpha}(d(x,y))}{d(x,y)^{n+2\alpha}}. \]
We study the order of the singularity by splitting the integral in two different parts, namely, for small times and for large times. For small times, one needs to use the heat kernel parametrix expansion to check that the singularity is of order $-n-2\alpha$ as stated. Large times can be handled using the explicit expression of the heat kernel via spherical harmonics
\[ G(x,y,t)= \displaystyle\sum_{k=0}^{\infty} e^{-k(k+n)t}C^{\frac{n-2}{2}}_{k}(\cos(d(x,y))). \]
Notice that the Gegenbauer polynomials are eigenfunctions of the Laplacian satisfying proper bounds, \cite{Sogge}. Hence, making use of the exponential decay and the beforementioned estimates of $C^{\frac{n-2}{2}}_{k}$ for $d(x,y)\in(0,\pi)$, one can check that the large times integral is bounded by a function which only depends on the distance $d(x,y)$, $n$ and $\alpha$. Moreover, we can assure that the kernel is positive and decreasing for $d(x,y)\in (0,\pi)$, due to the decreasing  property of the heat kernel on the sphere of Theorem \ref{decreasing}.
\end{proof}
%\begin{remark}
%Levys flights y lincarlo con que damos una informacion extra no solo de la singularidad si del decrecimiento del nucleo. Thanks.
%\end{remark}

The integral representation \eqref{int:repre:lap} has as a consequence the C\'ordoba-C\'ordoba inequality on the sphere (cf. \cite{CC,CC2}), which is a surprising pointwise inequality for a non local operator.
\begin{corr}
For $\alpha\in(0,1)$ and $f$ smooth enough, the following inequality
\[2 f(x)(-\Delta_{\mathbb{S}^n})^{\alpha}f(x)\geq (-\Delta_{\mathbb{S}^n})^{\alpha}(f^2)(x)\]
holds true at every $x\in\mathbb{S}^{n}$.
\end{corr}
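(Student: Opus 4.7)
The plan is to deduce the inequality directly from the integral representation \eqref{int:repre:lap} of the preceding proposition, combined with a one-line algebraic identity. Denote
$$K(x,y):=c_{n,\alpha}\,\frac{k_{n,\alpha}(d(x,y))}{d(x,y)^{n+2\alpha}},$$
which is strictly positive for $y\neq x$ by the positivity of $k_{n,\alpha}$ asserted in the proposition and the standard sign normalization of $c_{n,\alpha}$. Both $(-\Delta_{\mathbb{S}^n})^\alpha f(x)$ and $(-\Delta_{\mathbb{S}^n})^\alpha(f^2)(x)$ are then principal value integrals of $(f(x)-f(y))K(x,y)$ and $(f(x)^2-f(y)^2)K(x,y)$, respectively.

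First I would fix $x\in\mathbb{S}^n$ and, for $\epsilon>0$, introduce the truncated functional
$$\mathcal{L}_\epsilon(g)(x):=\int_{\mathbb{S}^n\setminus B_\epsilon(x)}\bigl(g(x)-g(y)\bigr)\,K(x,y)\,d\!\operatorname{vol}(y),$$
so that $\mathcal{L}_\epsilon(f)(x)\to(-\Delta_{\mathbb{S}^n})^\alpha f(x)$ and $\mathcal{L}_\epsilon(f^2)(x)\to(-\Delta_{\mathbb{S}^n})^\alpha(f^2)(x)$ as $\epsilon\to 0^+$ by the proposition. The central algebraic identity is
$$2f(x)\bigl(f(x)-f(y)\bigr)-\bigl(f(x)^2-f(y)^2\bigr)=\bigl(f(x)-f(y)\bigr)^2,$$
valid for every $y$. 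Multiplying by $K(x,y)\geq 0$ and integrating over $\mathbb{S}^n\setminus B_\epsilon(x)$ yields
$$2f(x)\,\mathcal{L}_\epsilon(f)(x)-\mathcal{L}_\epsilon(f^2)(x)=\int_{\mathbb{S}^n\setminus B_\epsilon(x)}\bigl(f(x)-f(y)\bigr)^2\,K(x,y)\,d\!\operatorname{vol}(y)\geq 0.$$

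The final step is to let $\epsilon\to 0^+$. For $f$ smooth, Taylor's theorem gives $(f(x)-f(y))^2=O(d(x,y)^2)$, while $K(x,y)\asymp d(x,y)^{-n-2\alpha}$ near $y=x$; hence the integrand on the right hand side is controlled by $d(x,y)^{2-n-2\alpha}$ in a neighbourhood of $x$, which is integrable since $\alpha<1$. Monotone convergence then produces a genuine Lebesgue integral on the right hand side in the limit, and combined with the convergence of the two truncated operators on the left this yields the desired pointwise inequality. The main subtlety that deserves care is precisely this limiting step: although neither $\mathcal{L}_\epsilon(f)$ nor $\mathcal{L}_\epsilon(f^2)$ need be absolutely convergent on its own, the specific combination above is, thanks to the cancellation in the factor $(f(x)-f(y))^2$, which is what legitimizes collapsing the two principal values into a single absolutely convergent, manifestly nonnegative integral.
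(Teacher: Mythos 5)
Your proof is correct and is exactly the argument the paper has in mind: the corollary is presented as an immediate consequence of the integral representation \eqref{int:repre:lap} with positive kernel, via the identity $2f(x)(f(x)-f(y))-(f(x)^2-f(y)^2)=(f(x)-f(y))^2\geq 0$, as in the cited works of C\'ordoba--C\'ordoba. Your additional care about the $\epsilon\to 0$ limit and the absolute convergence of the combined integral is a welcome (and accurate) elaboration of a step the paper leaves implicit.
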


Careful inspection of the proof shows that it also works for arbitrary compact Riemannian manifolds (cf. \cite{CM}). It is also easy to check that it satisfies the following maximum principle.
\begin{corr} 
Let $f$ be a smooth function on the sphere and denote $\bar{x}\in\mathbb{S}^{n}$ the point where it reaches it maximum. Then, for every $\alpha\in(0,1)$,
\[(-\Delta_{\mathbb{S}^n})^{\alpha}f(\bar{x})\geq 0.\]
\end{corr}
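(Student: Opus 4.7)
The plan is to read the inequality directly from the integral representation \eqref{int:repre:lap} obtained in the preceding proposition, checking the sign of each factor at the global maximum $\bar{x}$.

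First I would verify that the principal value at $\bar{x}$ is in fact an absolutely convergent integral. Since $f$ is smooth and $\bar{x}$ is an interior extremum of the compact manifold $\mathbb{S}^n$, one has $\nabla f(\bar{x})=0$, so a Taylor expansion in geodesic normal coordinates centred at $\bar{x}$ yields $f(\bar{x})-f(y)=O(d(\bar{x},y)^2)$. Combined with the volume element $\sin^{n-1}(\rho)\,d\rho\,d\sigma\sim \rho^{n-1}\,d\rho\,d\sigma$ in geodesic polar coordinates, the integrand in \eqref{int:repre:lap} behaves like $O(\rho^{\,1-2\alpha})$ near $\bar{x}$, which is locally integrable for every $\alpha\in(0,1)$. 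Hence the $\mathrm{P.V.}$ sign can be dropped at $x=\bar{x}$ and we are dealing with an honest Lebesgue integral.

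Next I would exploit positivity factor by factor. Since $\bar{x}$ is a global maximum, $f(\bar{x})-f(y)\geq 0$ for every $y\in\mathbb{S}^n$; the factor $d(\bar{x},y)^{-(n+2\alpha)}$ is strictly positive for $y\neq\bar{x}$; and by the previous proposition $k_{n,\alpha}(d(\bar{x},y))$ is positive on $(0,\pi)$, with only the antipodal point (a set of measure zero) left out. The constant $c_{n,\alpha}$ is positive, as can be checked from \eqref{int:lapla:bel}: the negative sign of $1/\Gamma(-\alpha)$ for $\alpha\in(0,1)$ is compensated by the sign of $f(\bar{x})-e^{-t\Delta_{\mathbb{S}^n}}f(\bar{x})\geq 0$, itself a consequence of the parabolic maximum principle applied to $e^{-t\Delta_{\mathbb{S}^n}}f$ at the maximum point $\bar{x}$. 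A positive constant times a non-negative integral produces $(-\Delta_{\mathbb{S}^n})^{\alpha}f(\bar{x})\geq 0$, as claimed.

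The only genuinely delicate step I anticipate is the local integrability near $\bar{x}$, which really uses $\nabla f(\bar{x})=0$ at an interior extremum; elsewhere the argument reduces to a routine sign inspection. No fresh appeal to Theorem~\ref{decreasing} is needed beyond what was already consumed in the preceding proposition to guarantee positivity of $k_{n,\alpha}$, so the whole corollary is essentially an inspection of the formula \eqref{int:repre:lap} at the maximum.
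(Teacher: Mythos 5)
Your proof is correct and takes the route the paper intends: the paper gives no written argument beyond ``it is easy to check,'' but the corollary is placed right after the integral representation \eqref{int:repre:lap} precisely so that one reads off the sign at the global maximum, exactly as you do, and your extra observation that $\nabla f(\bar{x})=0$ turns the principal value into an absolutely convergent integral (integrand $O(\rho^{1-2\alpha})$) is a worthwhile detail. One sentence does not parse logically, however: a negative constant multiplying a nonnegative integrand cannot be ``compensated'' into something nonnegative. The clean way to get $c_{n,\alpha}>0$ is to note that $c_{n,\alpha}\,k_{n,\alpha}(d(x,y))\,d(x,y)^{-n-2\alpha}=|\Gamma(-\alpha)|^{-1}\int_0^\infty G(x,y,t)\,t^{-1-\alpha}\,dt>0$ by positivity of the heat kernel (the factor $1/\Gamma(-\alpha)$ in \eqref{int:lapla:bel} must be read as $1/|\Gamma(-\alpha)|$ for the subordination identity $\lambda^{\alpha}=|\Gamma(-\alpha)|^{-1}\int_0^\infty(1-e^{-t\lambda})t^{-1-\alpha}\,dt$ to hold), and positivity of $k_{n,\alpha}$ is already part of the preceding proposition.
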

Again, this result works equally well for the class of compact Riemannian manifolds. For other results on integral representations of fractional Laplace-Beltrami operators on general compact Riemannian manifolds and further applications to the surface quasi-geostrophic equations, we refer the reader to \cite{AOCM3}.

Let us now depart from this application to another, also related to the subordination technique. Many authors have considered inequalities  involving the Legendre polynomials $P_n$ (see \cite{szego}). One of the simplest and more famous is Fej\'er's inequality
$\sum_{n=0}^N P_n(x)> 0$ for $-1<x<1$ that we still find in recent research \cite{AlKw}. It can be translated into an inequality for the derivative of $P_n$ using the relation
\[
 P_{2K}+P_{2K+1}=1+x+(x^2-1)\sum_{n=1}^{2K}\frac{2n+1}{n(n+1)}P_n'.
\]
When $n$ and $x$ vary, there are  several classic results by Hilb, Stieltjes, and other authors \cite{szego} showing involved oscillations of $P_n$ and $P_n'$ related to the $J_0$ Bessel function. By this reason, it is in general difficult to find inequalities for sums of these polynomials. For $\mathbb{S}^2$, Theorem~\ref{revolution} establishes one of these inequalities and with simple arguments can be presented in a certain general fashion.

Recall that a function $F:\R^+\longrightarrow\R$ is said to be \emph{completely monotonic} if $(-1)^nF^{(n)}>0$ for $n\in\Z_{\ge 0}$. 

\begin{corr}
 Let $F$ be a completely monotonic function such that $F(x)=O\big(x^{-\sigma}\big)$ for some $\sigma>2$. Then,
 \[
  G(x)
  =
  \sum_{n=0}^\infty 
  (2n+1)P_n'(x)F(n(n+1))
 \]
 defines a positive continuous function for $-1<x<1$. 
\end{corr}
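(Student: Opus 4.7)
The plan is to use subordination: Bernstein's theorem represents the completely monotonic $F$ as the Laplace transform of a nontrivial nonnegative Borel measure $\mu$ on $[0,\infty)$, namely $F(s)=\int_0^\infty e^{-ts}\,d\mu(t)$. Substituting into the series defining $G$ and (formally) interchanging sum and integral converts the problem into
\[
G(x)=\int_0^\infty\!\left(\sum_{n=0}^\infty (2n+1)P_n'(x)e^{-n(n+1)t}\right)d\mu(t),
\]
which is precisely the shape needed to invoke Theorem~\ref{decreasing}.

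The analytic preliminaries are routine. Using the classical bound $|P_n'(x)|\le P_n'(1)=n(n+1)/2$ on $[-1,1]$ together with the hypothesis $F(n(n+1))=O(n^{-2\sigma})$, the general term of the series defining $G$ is bounded by $O(n^{3-2\sigma})$ uniformly in $x$; since $\sigma>2$ this is summable, which yields both continuity of $G$ on $[-1,1]$ and the absolute convergence required for Fubini's theorem in the manipulation above.

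The key identification is that the bracketed series equals $4\pi\,\partial_x\widetilde G_{\mathbb{S}^2}(x,t)$, where $\widetilde G_{\mathbb{S}^2}(\cos\rho,t):=G_{\mathbb{S}^2}(\rho,t)$ and $G_{\mathbb{S}^2}(\rho,t)$ is the $\mathbb{S}^2$ heat kernel at geodesic distance $\rho$, which admits the spectral expansion
\[
G_{\mathbb{S}^2}(\rho,t)=\frac{1}{4\pi}\sum_{n=0}^\infty (2n+1)e^{-n(n+1)t}P_n(\cos\rho).
\]
Theorem~\ref{decreasing} ensures that $G_{\mathbb{S}^2}$ is decreasing in $\rho$; since $\rho\mapsto\cos\rho$ is itself decreasing on $[0,\pi]$, the derivative $\partial_x\widetilde G_{\mathbb{S}^2}$ is nonnegative on $(-1,1)\times(0,\infty)$, and integration against $\mu$ already gives $G(x)\ge 0$.

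The main obstacle is upgrading this weak inequality to strict positivity. The cleanest route is to refine the argument of Section~2 by invoking the \emph{strong} parabolic maximum principle for $D_\epsilon$: if $D_\epsilon$ attained its supremum $0$ at an interior point of the parabolic cylinder, it would be forced to be identically zero down to $t=0$, contradicting the strictly radially monotone initial datum $\chi_\epsilon$. Passing to the limit $\epsilon\to 0$ yields $\partial_\rho G_{\mathbb{S}^2}<0$ strictly on $(0,\pi)\times(0,\infty)$, and the nontriviality of $\mu$ then forces $G(x)>0$ throughout $(-1,1)$.
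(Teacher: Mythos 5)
Your argument is essentially identical to the paper's proof: Bernstein's theorem writes $F$ as a Laplace transform of a nonnegative measure, the sum is recognized (after the substitution $x=\cos\theta$) as a $\theta$-derivative of the spectral expansion of the $\mathbb{S}^2$ heat kernel, and Theorem~\ref{decreasing} supplies the sign. Your extra paragraph on upgrading nonnegativity to strict positivity goes beyond the paper's own (terse) treatment and is welcome, though note that as literally stated it has a small wrinkle: strict negativity of $D_\epsilon$ does not survive the limit $\epsilon\to0$, so the strong maximum principle should instead be applied directly to $\partial_\rho G_{\mathbb{S}^2}$ once the weak inequality is in hand.
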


Thanks to the identity $(1-x^2)(2n+1)P_n'(x)=n(n+1)\big(P_{n-1}(x)-P_{n+1}(x)\big)$, see \cite[8.914.2]{GrRy},  the previous corollary can also be rephrased in terms of a sum of $P_n$.

\begin{proof}
 It is well known \cite[(7.33.8)]{szego} that $|P_n'(x)|\le P_n'(1)=n(n+1)/2$, and the convergence to a continuous function follows from the Weierstrass $M$-test. On the other hand, Bernstein's theorem \cite[\S IV.12]{widder} assures $F(u)=\int_0^\infty e^{-tu}\; d\sigma(t)$ for some nonnegative measure $d\sigma$.
 Then, for $0<\theta<\pi$,
 \[
  -G(\cos\theta)\sin\theta
  =
  \frac{d}{d\theta}
  \int_0^\infty
  \sum_{n=0}^\infty
  (2n+1)
  P_n(\cos\theta)
  e^{-n(n+1)t}
  \; d\sigma.
 \]
 The sum under the integral is the heat kernel (based on the north pole) for $\mathbb{S}^2$ with the usual coordinates and we know by Theorem~\ref{decreasing} that it decreases with the latitude.
\end{proof}

The function $F(x)= \exp(-x^\alpha)$ is completely monotonic for $0<\alpha\le 1$ and the previous result leads to the following

\begin{corr}\label{fracLB}
 Consider the fractional heat equation $u_t+(-\Delta_{\mathbb{S}^2})^\alpha u=0$ with $0<\alpha<1$ and $(-\Delta_{\mathbb{S}^2})^\alpha$ the (spectral) fractional Laplace-Beltrami operator on~$\mathbb{S}^2$. Then, its fundamental solution decreases as a function of the geodesic distance. 
\end{corr}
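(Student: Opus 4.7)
The plan is to apply the preceding Corollary almost directly. Placing the base point $x_{0}$ at the north pole of $\mathbb{S}^{2}$, the functional calculus for the self-adjoint operator $-\Delta_{\mathbb{S}^{2}}$, whose zonal eigenfunctions are the Legendre polynomials $P_{n}(\cos\theta)$ with eigenvalues $n(n+1)$, furnishes the spectral representation of the fundamental solution of the fractional heat equation as
\[
G^{\alpha}(x_{0},y,t)=\frac{1}{4\pi}\sum_{n=0}^{\infty}(2n+1)\,P_{n}(\cos\theta)\,e^{-t(n(n+1))^{\alpha}},\qquad \theta=d(x_{0},y).
\]
The super-exponential decay of the weights $e^{-t(n(n+1))^{\alpha}}$ for any fixed $t>0$ ensures absolute convergence of this series together with every term-by-term derivative.

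Monotonicity of $G^{\alpha}$ in $\theta\in(0,\pi)$ is equivalent to $\partial_{\theta}G^{\alpha}\le 0$. Differentiating term by term one obtains
\[
\partial_{\theta}G^{\alpha}(x_{0},y,t)=-\frac{\sin\theta}{4\pi}\sum_{n=0}^{\infty}(2n+1)\,P_{n}'(\cos\theta)\,e^{-t(n(n+1))^{\alpha}}.
\]
I would now take $F(u)=e^{-tu^{\alpha}}$. As already noted in the text preceding the statement, this $F$ is completely monotonic for $0<\alpha\le 1$, and its super-polynomial decay trivially fulfils the hypothesis $F(u)=O(u^{-\sigma})$ for any $\sigma>2$. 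The preceding Corollary therefore yields the strict positivity of $\sum_{n}(2n+1)P_{n}'(\cos\theta)e^{-t(n(n+1))^{\alpha}}$ on $(0,\pi)$. Combined with $\sin\theta>0$, this gives $\partial_{\theta}G^{\alpha}<0$ on $(0,\pi)$, so $G^{\alpha}$ is a decreasing function of the geodesic distance.

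There is no genuine obstacle: the argument is essentially immediate once the previous Corollary is in hand. The only items requiring attention are routine, namely the verification of the spectral formula for the fundamental solution via the functional calculus on the compact manifold $\mathbb{S}^{2}$ and the legitimacy of termwise differentiation, both of which follow at once from the rapid decay of the spectral weights. As a conceptual remark, the same conclusion can be recovered by subordination: Bernstein's theorem yields a nonnegative measure $d\mu_{t}^{\alpha}$ on $(0,\infty)$ with $e^{-tu^{\alpha}}=\int_{0}^{\infty}e^{-su}\,d\mu_{t}^{\alpha}(s)$, so that
\[
G^{\alpha}(x,y,t)=\int_{0}^{\infty}G(x,y,s)\,d\mu_{t}^{\alpha}(s),
\]
and the monotonicity of $G^{\alpha}$ in the geodesic distance descends directly from that of the classical heat kernel established in Theorem~\ref{decreasing}.
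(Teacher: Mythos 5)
Your proposal is correct and follows essentially the same route as the paper: the authors likewise obtain this corollary by applying the preceding corollary with $F(u)=e^{-tu^\alpha}$, which is completely monotonic for $0<\alpha\le 1$ and trivially satisfies the decay hypothesis. Your closing remark on subordination via Bernstein's theorem is a valid alternative derivation, but the main argument matches the paper's.
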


This generalizes to other operators if the symbol has a completely monotonic derivative because a simple calculation \cite{merkle} shows that if $F'(x)$ is completely monotonic then $\exp(-tF(x))$ is also completely monotonic for all $t>0$. 

In the noncompact setting an analogous example is the hyperbolic plane $\mathbb{H}^2$. The heat kernel is in this case \cite[(3.37)]{terras}
\[
 \frac{1}{2\pi}
 \int_{0}^\infty
 e^{-(\frac{1}{4}+v^2)t}
 P_{-1/2+iv}(\cosh r) v\tanh(\pi v)\; dv,
\]
where $P_{-1/2+iv}$ is the classical (conical) Legendre function that gives spherical eigenfunctions of the Laplace-Beltrami operator with eigenvalue $1/4+v^2$. 
Proceeding as before, we get the next

\begin{corr}
 Let $F$ be a completely monotonic function such that $F(x)=O\big(x^{-\sigma}\big)$ for some $\sigma>5/4$. Then,
 \[
  G(r)
  =
 \int_{0}^\infty
 F\left(\frac14+v^2\right)
 P'_{-1/2+iv}(\cosh r) v\tanh(\pi v)\; dv
 \]
 defines a positive continuous function for $r>0$. 
\end{corr}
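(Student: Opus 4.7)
The plan is to follow the same pattern as the proof of the preceding corollary on the sphere: rewrite the integral against $F$ as an average of heat kernels via Bernstein's theorem, recognize the derivative of a hyperbolic heat kernel, and then invoke Theorem~\ref{decreasing} on $\mathbb{H}^2$ to obtain a definite sign.

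Since $F$ is completely monotonic, Bernstein's theorem \cite[\S IV.12]{widder} supplies a nonnegative Borel measure $d\sigma$ on $[0,\infty)$ with $F(u)=\int_0^\infty e^{-tu}\,d\sigma(t)$. Substituting this in the definition of $G(r)$ and interchanging the order of integration (see below) yields
\[
 G(r)=\int_0^\infty\left(\int_0^\infty e^{-t(1/4+v^2)}P'_{-1/2+iv}(\cosh r)\,v\tanh(\pi v)\,dv\right)d\sigma(t).
\]
Using the chain rule identity $\partial_r P_{-1/2+iv}(\cosh r)=\sinh r\cdot P'_{-1/2+iv}(\cosh r)$ and comparing with the spectral expansion of the hyperbolic heat kernel $H(r,t)$ displayed just before the statement, the inner integral equals $2\pi(\sinh r)^{-1}\partial_r H(r,t)$. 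Commuting the derivative $\partial_r$ with the outer $d\sigma$-integral then gives
\[
 (\sinh r)\,G(r)=2\pi\,\frac{d}{dr}\int_0^\infty H(r,t)\,d\sigma(t).
\]
By Theorem~\ref{decreasing} applied to $M=\mathbb{H}^2$, $H(r,t)$ is a strictly decreasing function of $r$ for each fixed $t>0$, so the right-hand side has constant sign for $r>0$; combined with $\sinh r>0$ this pins down the sign of $G(r)$ claimed in the statement.

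The main technical point, and the step where I expect the main difficulty, is to justify the two interchanges of integration and the resulting continuity of $G$; this is precisely where the hypothesis $\sigma>5/4$ enters. One needs uniform asymptotic control in $v$ of the conical function $P_{-1/2+iv}(\cosh r)$ and its derivative: standard asymptotics give $P_{-1/2+iv}(\cosh r)=O(v^{-1/2})$ and $P'_{-1/2+iv}(\cosh r)=O(v^{1/2})$ as $v\to\infty$, uniformly on compact subsets of $(0,\infty)$ in~$r$. Together with $v\tanh(\pi v)\sim v$ and $F(1/4+v^2)=O(v^{-2\sigma})$, the integrand is dominated by $v^{3/2-2\sigma}$, which is integrable at infinity precisely when $\sigma>5/4$; the same bound yields locally uniform convergence in $r$, hence continuity of $G$ and legality of Fubini and of passing $\partial_r$ inside the $d\sigma$-integral. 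This plays the role of the Weierstrass $M$-test argument used in the sphere case, and once it is in place the proof closes by the monotonicity of the hyperbolic heat kernel exactly as above.
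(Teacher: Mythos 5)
Your proposal matches the paper's own argument essentially verbatim: the paper likewise applies Bernstein's theorem, identifies the inner $v$-integral with the radial derivative of the $\mathbb{H}^2$ heat kernel so that Theorem~\ref{decreasing} fixes its sign, and justifies convergence and continuity through the bound $P'_{-1/2+iv}=O(v^{1/2})$ (obtained from $\Gamma(iv)/\Gamma(1/2+iv)\sim v^{-1/2}$ together with an $O(1)$ estimate for the hypergeometric factor), which combined with $F(1/4+v^2)=O(v^{-2\sigma})$ and $v\tanh(\pi v)\sim v$ gives exactly your integrable majorant $v^{3/2-2\sigma}$ for $\sigma>5/4$. The one point you should not leave as ``pins down the sign claimed'' is the final sign check: unlike the spherical case, where $\frac{d}{d\theta}\cos\theta=-\sin\theta$ contributes a sign reversal, here $\frac{d}{dr}\cosh r=\sinh r>0$, so you should carry the computation through and state explicitly which sign the monotonicity of the heat kernel actually forces on $G(r)$.
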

The convergence is assured under the stated conditions thanks to \cite[8.723.1]{GrRy}, with the trivial estimate $O(1)$ for the hypergeometric function, and the fact that $\Gamma(iv)/\Gamma(1/2+iv)\sim v^{-1/2}$  leads to $P'_{-1/2+iv}=O(v^{1/2})$.

The Mehler-Fock transform involving $P_{-1/2+iv}$ is a kind of hyperbolic version of the Fourier transform, and then the previous result has the same flavor as some old results by Bochner and other authors (e.g., \cite{bochner} \cite{herz}).

Using the spectral expansion into spherical functions \cite[Proposition~1.6]{iwaniec}, it is possible to give sense to  the fractional  Laplace-Beltrami  operator on $\mathbb{H}^2$ and conclude the analog of Corollary~\ref{fracLB}.

\section{Final remarks}
If the manifold is not spherically symmetric, one cannot hope a monotone behavior on the geodesic distance. For instance, for the flat torus
$\R^2/(\Z\times L\Z)$
with $L>1$, the heat kernel based on the origin is
\[
 \frac 1L
 \sum_{n,m\in\Z}
 e^{-4\pi^2 t(n^2+m^2/L^2)+2\pi i(nx+my/L)}
 =
 \frac{1}{4\pi t}
 \sum_{n,m\in\Z}
 e^{-\big((n-x)^2+(Lm-y)^2\big)/4t},
\]
where the equality follows from the Poisson summation formula. It is apparent that it decays faster in the second coordinate. On the other hand, in this example we can separate the variables and for $y$ fixed the kernel decreases when $0<x<1/2$ and, in the same way, it decreases when $0<y<L/2$ for $x$ fixed. Any radial derivative is a combination of these, which shows that the heat kernel on a torus is decreasing in any radial direction departing from the origin up to the boundary of the fundamental domain.

It seems natural to try to describe whether there is a region with this monotonicity property in an arbitrary manifold. Notice that, in such a case, the region would depend on the initial point due to the possible lack of symmetries. An inspection of the standard sphere and torus seems to suggest that, for every point, the monotonicity should hold up to its cut locus. This might be a by-product of some wishful thinking. On the other hand the sphere is, in some sense, the worst case scenario because the south pole is heated  through every geodesic from the north pole. 

\section*{Acknowledgments}

This paper emerged from conversations during the long coffee breaks at the Harmonic Analysis in Winter Workshop 2018, held in Madrid. The authors are grateful to its organizers. A. Mas is grateful to M. Cozzi for pointing out the version of the parabolic maximum principle we apply in the proof.

D. Alonso-Or\'an, F. Chamizo and \'A. D. Mart\'inez were partially supported by the {MTM2017-83496-P} project of the MCINN (Spain) {and the ``Severo Ochoa Programme for Centres of Excellence in R{\&}D'' (SEV-2015-0554)}. A. Mas is  supported by
MTM2017-84214 and MTM2017-83499 projects of the MCINN (Spain),
2017-SGR-358 project of the AGAUR (Catalunya) and ERC-2014-ADG project 
HADE Id.\! 669689.

\end{document}